\documentclass{elsarticle}%
\usepackage{amsmath}
\usepackage{amsfonts}
\usepackage[colorlinks=true, urlcolor=blue]{hyperref}
\usepackage{amsthm}

\usepackage{amssymb}
\usepackage{bbold}
\usepackage{mathtools}
\usepackage{xcolor}
\usepackage{soul}
\usepackage{mathrsfs}
\usepackage{tabularx}
\usepackage[top=1in, bottom=1in, left=1in, right=1in]{geometry}
\usepackage{graphicx}
\setcounter{MaxMatrixCols}{30}
\providecommand{\U}[1]{\protect\rule{.1in}{.1in}}

\newtheorem{theorem}{Theorem}[section]
\newtheorem{lemma}{Lemma}[section]

\theoremstyle{definition}
\newtheorem{definition}{Definition}[section]
\theoremstyle{remark}
\newtheorem{remark}{Remark}[section]

\numberwithin{equation}{section}
\begin{document}
	\begin{frontmatter}

\title{On a Diophantine Equation Involving Lucas Numbers
}

		
		
		\author[1]{Seyran S. Ibrahimov}
		\ead{seyran.ibrahimov@emu.edu.tr}
		\author[1,2]{Nazim I. Mahmudov}
		\ead{nazim.mahmudov@emu.edu.tr}
		
		\address [1] {Department of Mathematics, Eastern Mediterranean University, Mersin 10, 99628, T.R. North Cyprus, Turkey}
		\address [2] {Research Center of Econophysics, Azerbaijan State University of Economics (UNEC), Istiqlaliyyat Str. 6, Baku 1001,Azerbaijan}
		
		

\begin{abstract}
Let $L_t$ denote the $t$-th Lucas number. We prove that the Diophantine equation
\[
L_{m}^{n+k} + L_{m}^{n} = L_r
\]
has no solutions in positive integers $r, m, n,$ and $k$ with $m \ge 2$. In particular, no Lucas number, except $L_0 = 2$, can be expressed as the sum of two distinct powers of another Lucas number. For $n = 1$, the proof relies on a precise factorization formula for the difference of two Lucas numbers together with the Carmichael Primitive Divisor Theorem. For $n \ge 2$, we apply Matveev’s theorem, combined with Legendre’s lemma and an exact divisibility property of powers of Lucas numbers.
\end{abstract}

\begin{keyword}
Diophantine problems \sep Matveev's theorem \sep Legendre's lemma \sep Lucas numbers \\
\textit{Mathematics Subject Classification:} 11B39, 11D61, 11B83
\end{keyword}
		
\end{frontmatter}
	
\section{INTRODUCTION}
Let $a$ and $b$ be relatively prime integers, and let $(U_n)_{n\ge 0}$ and $(V_n)_{n\ge 0}$ denote the Lucas sequences of the first and second kinds, respectively. These sequences are defined recursively by
\[
U_0 = 0,\quad U_1 = 1,\quad U_n = aU_{n-1} + bU_{n-2} \quad \text{for } n \ge 2,
\]
and
\[
V_0 = 2,\quad V_1 = a,\quad V_n = aV_{n-1} + bV_{n-2} \quad \text{for } n \ge 2.
\]

To avoid trivial cases, we assume that $b \neq 0$ and that $\lambda/\kappa$ is not a root of unity, where $\lambda$ and $\kappa$ are the roots of the characteristic polynomial $x^2 - ax - b$. In particular, these assumptions imply that $\lambda \neq \kappa$, $\lambda \neq -\kappa$, the discriminant $D = a^2 + 4b \neq 0$, and that $U_n \neq 0$ and $V_n \neq 0$ for all $n \ge 1$.

It is well known that the Binet formulas
\[
U_n = \frac{\lambda^n - \kappa^n}{\lambda - \kappa}, \qquad
V_n = \lambda^n + \kappa^n, \quad \text{hold for all } n \ge 0.
\]
Many famous integer sequences arise as special cases of Lucas sequences. For example, the sequence of Fibonacci numbers is obtained from $(U_n)_{n \ge 0}$ by taking $a = b = 1$, and the sequence of Pell numbers is obtained from $(U_n)_{n \ge 0}$ by taking $a = 2$ and $b = 1$.

In particular, when $a = b = 1$, the sequence $(V_n)_{n \ge 0}$ coincides with the sequence of Lucas numbers $\{L_n\}_{n=0}^{\infty}$, which is defined by
\[
L_0 = 2, \quad L_1 = 1, \quad L_n = L_{n-1} + L_{n-2} \quad \text{for } n \ge 2.
\]

In this paper, we investigate the Diophantine equation
\begin{equation}\label{1.1}
L_{m}^{n+k} + L_{m}^{n} = L_r
\end{equation}
in positive integers $r, m, n,$ and $k$ with $m \ge 2$.

Moreover, the Binet formula for Lucas numbers is given by
\begin{equation}\label{Binet}
L_n = \alpha^{n} + \beta^{n}, \quad n \ge 0,
\end{equation}
where
\[
(\alpha, \beta) = \left( \frac{1 + \sqrt{5}}{2}, \frac{1 - \sqrt{5}}{2} \right).
\]

Using this representation, one obtains the inequalities (see \cite{4})
\begin{equation}\label{1.3}
\alpha^{n-1} \le L_n < \alpha^{n+1}, \quad n \ge 1.
\end{equation}

We now briefly review some related Diophantine problems from the literature. In \cite{2}, the authors solved a Diophantine equation involving Fibonacci numbers that is structurally similar to \eqref{1.1},
\[
F_n = F_l^{k}(F_l^{m} - 1),
\]
where \(n, m \geq 1\), \(k \geq 2\), and \(l \geq 3\).

Furthermore, in \cite{9}, Luca and Stănică proved that the equation
\begin{equation}\label{1.5}
w_n = p^a \pm p^b
\end{equation}
has only finitely many effectively computable positive integer solutions $(n, p, a, b)$, where $n \ge 3$, $a \ge \max\{2, b\}$, and $p$ is a prime. Here, $\{w_t\}_{t=0}^{\infty}$ denotes a Lucas sequence of the first $(U_n)_{n\ge 0}$ or second kind $(V_n)_{n\ge 0}$ whose characteristic polynomial has positive discriminant. When $L_m$ is prime, equation \eqref{1.1} becomes a special case of \eqref{1.5}. Nevertheless, even in this restricted situation, the present work provides a complete answer concerning the existence of solutions to equation \eqref{1.1}.

We also observe that, in the case $n = 1$, equation \eqref{1.1} reduces to
\begin{align*}
L_m^{k+1} + L_m = L_r.
\end{align*}

A more general form of this type of equation was investigated by Luca and Patel in \cite{7} in the context of Fibonacci numbers. More precisely, they studied the Diophantine equation
\begin{align*}
F_n \pm F_m = y^p,
\end{align*}
in integers $(n, m, y, p)$ with $p \ge 2$ and $n \equiv m \pmod{2}$. They proved that all such solutions satisfy either
$\max\{|n|, |m|\} \le 36,$ or $y = 0$ and $|n| = |m|$. The problem remains open in the case $n \not\equiv m \pmod{2}$.

For further related results, we refer the reader to \cite{3,5,8,13}.

\section{AUXILIARY RESULTS}
	
In this section, we provide a detailed overview of the fundamental tools employed in the proof of the main result.

The following result is well known and can also be established using formula \eqref{Binet}.
\begin{lemma}\label{factor}
If $r-m$ is even, then:
\[
L_r - L_m =
\begin{cases}
5 F_{\frac{r+m}{2}} F_{\frac{r-m}{2}}, & \text{if } r-m \equiv 0 \pmod{4}, \\[6pt]
L_{\frac{r+m}{2}} L_{\frac{r-m}{2}}, & \text{if } r-m \equiv 2 \pmod{4}.
\end{cases}
\]
\end{lemma} 
The following result can be found in \cite{4}.
\begin{lemma}\label{L-div}
\[
L_m \mid L_r
\quad \Longleftrightarrow \quad
r = m(2s-1)
\]
for some integer $s \ge 2$.
\end{lemma}
The following result is due to Carmichael \cite{1}.

\begin{lemma}[Carmichael Primitive Divisor Theorem]\label{Carmichael}
If $n \neq 1,6$, then $L_n$ has a primitive prime divisor; that is, 
there exists a prime $p \mid L_n$ such that
\[
p \nmid L_t \,\,\ \text{for all} \,\,\ t < n.
\]
\end{lemma}

We now state a lemma on exact divisibility by powers of Lucas numbers, which plays a crucial role in the proof of our main theorem. Results concerning divisibility by powers of recurrence sequences have attracted significant attention, particularly due to their application in Matijasevich's solution of Hilbert's 10th problem. In this direction, the most general results were established by Onphaeng and Pongsriiam \cite{11}. The lemma presented here enables us to considerably simplify the proof of our main result.

In this context, for integers $a \geq 2$, $k \geq 0$, and $b \geq 1$, we say that $a^k$ exactly divides $b$ by writing $a^k \parallel b$ if $a^k \mid b$ and $ a^{k+1} \nmid b$.

\begin{lemma}\label{lemma 2.4}\cite{12}
Let \( n, m, r \) be positive integers with \( n \geq 2 \) and \( m \geq 2 \).
 If \( L_m^n \parallel L_r \), then $
m \not\equiv 0 \pmod{3}$, \,\ $\frac{r}{m}$ is odd and \( L_m^{n-1} \parallel \frac{r}{m} \).
\end{lemma}

From equations \eqref{1.1}, we see that \( L_m^n \mid\mid L_r \). Applying Lemma \ref{lemma 2.4}, we find that for all \( n \geq 2 \) and \( m \geq 2 \), \( L_m^{n-1} \mid\mid \frac{r}{m} \). Then we get :
	
\begin{equation*}
m\cdot L_m^{n-1}\leq r
\end{equation*}
Hence, using inequalities \eqref{1.3}, we obtain
\begin{equation}\label{2.1}
\log(m) + (n - 1)(m - 1)\log \alpha \leq \log r,
\end{equation} 

	Next, we introduce some fundamental concepts from algebraic number theory.
	Let $z$ be an algebraic number of degree d with minimal polynomial
	
	\begin{align*}
		a_{0}x^{d}+a_{1}x^{d-1}+...+a_{d}=a_{0}\prod_{i=1}^{d}(x-z^{(i)})
	\end{align*}
	where the $a_{1}$, $a_{2}$,..., $a_{d}$ are relatively prime integers with $a_{0}> 0$ and $z^{(1)}$, $z^{(2)}$,...,  $z^{(d)}$  are conjugates of $z$.
	
\begin{definition}
The logarithmic height of $z$ is defined by
\begin{align*}
h(z)=\frac{1}{d}\bigg(\log a_{0}+\sum_{i=1}^{d} \log\big(\max\lbrace \vert z^{(i)}\vert,1\rbrace\big)\bigg)
\end{align*}
\end{definition}
	
	At this stage, we present the following lemma, which was introduced by Legendre in his book \cite{6}.
	
\begin{lemma}\label{lemma 2.5}
		
Let $x$ be a real number, with continued fraction expansion
\[
x = [a_0; a_1, a_2, a_3, \dots]
\] 
and let $p, q \in \mathbb{Z}$. If
\[
\left|x - \frac{p}{q} \right| < \frac{1}{2q^2}
\]
then $\frac{p}{q}$ is a convergent of the continued fraction of $x$. Furthermore, if $S$ and $N$ are non-negative integers such that $q_N > S$, then
\[
\left|x - \frac{p}{q} \right| > \frac{1}{(J(S) + 2)q^2},
\]
where $J(S) := \max \{a_i : i = 0, 1, 2, \dots, N\}$.
\end{lemma}
	
We will subsequently present a consequence of Matveev's theorem \cite{10}.
\begin{theorem}
Assume that $\beta_{1},\dots, \beta_{n}$ are positive algebraic numbers in a real algebraic number field $\mathbb{L}$ of degree $d$, $r_{1},\dots, r_{n}$ are rational integers, and
\begin{align*}
\Delta := \beta_{1}^{r_{1}}\dots \beta_{n}^{r_{n}}-1\not=0.
\end{align*}
then
\begin{align} \label{Matveev ineq}
\vert \Delta\vert > \exp \bigg(-1.4 \cdot 30^{n+3} \cdot n^{4.5} \cdot D^{2}(1+\log D)(1+\log T)A_{1}\dots A_{n}\bigg),
\end{align}
where $T\geq \max\lbrace \vert r_{1}\vert ,\dots, \vert r_{n}\vert\rbrace$, and $A_{j}\geq \max\lbrace Dh(\beta_{j}), \vert \log \beta_{j}\vert, 0.16\rbrace$, for all $j=1,\dots,n$.
	\end{theorem}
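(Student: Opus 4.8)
The plan is to recognize the statement as the explicit lower bound for a linear form in logarithms, so rather than working with $\Delta$ directly I would pass to the additive quantity $\Lambda := \sum_{j=1}^{n} r_j \log\beta_j$, using that $\Delta = e^{\Lambda}-1$ and hence $|\Delta| \geq \tfrac12|\Lambda|$ whenever $\Lambda$ is small. Thus it suffices to bound $|\Lambda|$ from below, and I may assume for contradiction that $|\Lambda|$ is smaller than the target exponential. Since everything lives in the real field $\mathbb{L}$ of degree $d$, I would then set up the transcendence machinery of Baker's method, in the sharpened form of Waldschmidt–Matveev, and optimize the parameters to recover the explicit constants; I should say at the outset that this is a deep result from the theory of linear forms in logarithms, which is why the paper cites rather than reproves it.

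First I would introduce the one-variable auxiliary functions $f_j(z) = \beta_j^{z} = e^{z\log\beta_j}$ and record that $(\log\beta_1,\ldots,\log\beta_n)$ satisfies the near-relation $\sum_j r_j \log\beta_j = \Lambda \approx 0$. The key construction is an \emph{interpolation determinant} $\mathcal{D} = \det\big(\varphi_i(\zeta_\ell)\big)$, whose rows are indexed by monomials $\varphi_i(z) = z^{\sigma_i}\prod_j f_j(z)^{\tau_{i,j}}$ ranging over a carefully chosen box of exponents and whose columns are indexed by integer sampling points $\zeta_\ell \in \{0,1,\ldots,S\}$, taken with derivatives up to a prescribed order. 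This replaces the classical Siegel's-lemma auxiliary polynomial and is what ultimately yields Matveev's sharp numerical constants.

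The heart of the argument is the clash of two estimates on $\mathcal{D}$. For the analytic upper bound I would expand the determinant, viewed as an analytic function of an auxiliary variable, via a Schwarz/maximum-modulus argument and feed in the smallness of $\Lambda$: eliminating $\log\beta_n$ through $\log\beta_n = (\Lambda - \sum_{j<n} r_j\log\beta_j)/r_n$ makes the rows nearly dependent on the sampling set, forcing $|\mathcal{D}| < \exp(-cU)$ for a large parameter $U$. For the arithmetic lower bound I would observe that $\mathcal{D}$, after clearing denominators, is a nonzero algebraic integer of $\mathbb{L}$ whose house and denominator are controlled by the heights $h(\beta_j)$, hence by the quantities $A_j$; a Liouville/size inequality then shows that a nonzero $\mathcal{D}$ cannot drop below $\exp(-c'U)$. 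Balancing $S$, the box sizes, and the derivative order so that $c'U$ beats $cU$ produces the contradiction and, with the right optimization, the stated bound.

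The step I expect to be the main obstacle — and the reason this is imported rather than self-contained — is guaranteeing $\mathcal{D}\neq 0$, since otherwise the Liouville estimate is vacuous: this requires a quantitative zero-multiplicity estimate of Philippon type ensuring that the chosen monomials are not genuinely linearly dependent on the sampling data, together with Kummer-theory and descent input to exclude degenerate multiplicative relations among the $\beta_j$. Equally delicate is the bookkeeping that converts the qualitative contradiction into the explicit constant $1.4\cdot 30^{n+3}\, n^{4.5}\, D^{2}(1+\log D)(1+\log T)A_1\cdots A_n$; every parameter must be tuned and Feldman's determinant sharpening invoked, and it is in this quantitative optimization, rather than in the underlying transcendence principle, that the bulk of the technical labor resides.
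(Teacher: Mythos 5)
The paper does not prove this statement at all: it is quoted as a known ``consequence of Matveev's theorem'' with citations to Matveev's paper and to Bugeaud--Mignotte--Siksek, and is then used as a black box in Section 3. So there is no internal proof to compare against, and your instinct that this result should be cited rather than reproved is exactly the paper's own treatment. Your sketch of the underlying argument is a fair roadmap of the literature: the reduction from $\Delta=e^{\Lambda}-1$ to the linear form $\Lambda$, the clash between an analytic (Schwarz-type) upper bound and an arithmetic (Liouville-type) lower bound on an auxiliary determinant, the role of zero estimates and Kummer descent in securing non-vanishing, and Fel'dman's refinement behind the $(1+\log T)$ factor are all genuine features of proofs in this family.

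That said, as a \emph{proof} the proposal is only scaffolding, and you should be clear-eyed about where it stops being one. The two steps you flag as ``the main obstacle'' --- the non-vanishing of the interpolation determinant and the optimization producing the constant $1.4\cdot 30^{n+3}n^{4.5}D^{2}(1+\log D)(1+\log T)A_{1}\cdots A_{n}$ --- are not obstacles to be noted; they \emph{are} the theorem, and naming Philippon-type zero estimates does not discharge them. There is also a mismatch of scope worth noting: the statement in the paper is a simplified corollary of Matveev's general theorem (restricted to a real field, positive $\beta_j$, and with $B$ replaced by $T$), and the only ``proof'' one could reasonably supply within this paper is the routine specialization of Matveev's general statement to this form --- a few lines of parameter bookkeeping, as done in the cited Bugeaud--Mignotte--Siksek paper --- rather than a from-scratch reconstruction of Baker's method. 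If you intend your write-up to stand in for a proof, it should either carry out that specialization explicitly or, like the paper, simply cite the result and move on.
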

	
\section{ MAIN RESULTS}
\begin{theorem} There is no solution to equation~\eqref{1.1} in positive integers \( r, m, n, k \) with \( m \geq 2 \).
\end{theorem}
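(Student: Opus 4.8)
The plan is to turn \eqref{1.1} into a small linear form in two logarithms, bound the free exponent $k$ by Matveev's theorem, collapse that bound by a continued-fraction argument, and finish with a bounded search.

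First I would insert the Binet formula $L_r=\alpha^r+\beta^r$, with $|\beta^r|<1$, and exploit that $k\ge 1$ makes $L_m^{n+k}$ the dominant term on the left of \eqref{1.1}. Writing $\alpha^r+\beta^r=L_m^{n}\bigl(L_m^{k}+1\bigr)$ and dividing by $L_m^{n+k}$ yields
\[
\bigl|\alpha^{r}L_m^{-(n+k)}-1\bigr|
=\Bigl|L_m^{-k}-\beta^{r}L_m^{-(n+k)}\Bigr|
<2L_m^{-k}.
\]
The quantity $\Lambda:=\alpha^{r}L_m^{-(n+k)}-1$ is nonzero, since $\alpha^{r}$ is irrational while $L_m^{n+k}$ is a rational integer, so Matveev's theorem applies.

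Next I would apply \eqref{Matveev ineq} over $\mathbb{L}=\mathbb{Q}(\sqrt5)$ (so $D=2$) with $\beta_1=\alpha$, $\beta_2=L_m$ and exponents $r_1=r$, $r_2=-(n+k)$; here $h(\alpha)=\tfrac12\log\alpha$ and $h(L_m)=\log L_m$, and comparing sizes in \eqref{1.1} gives $\alpha^{n+k}<L_m^{n+k}\le L_r<\alpha^{r+1}$, hence $n+k\le r$ and one may take $T=r$. Matveev's lower bound, combined with the upper bound $|\Lambda|<2L_m^{-k}$, leads after dividing by $\log L_m$ to a bound of the form $k<C'(1+\log r)$ with $C'$ an absolute constant (the factor $\log L_m$ cancelling). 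Since $\log L_m<(m+1)\log\alpha$ by \eqref{1.3}, the equation also gives $r<(n+k)(m+1)+1$; feeding in the divisibility bounds on $m$ and $n$ coming from Lemma~\ref{lemma 2.1} and \eqref{2.1} (both $m$ and $n$ are $O(\log r)$) forces $r\ll(\log r)^2$. Hence $r$, and therefore $m$, $n$, $k$, are all bounded by explicit but enormous constants.

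To make these bounds usable I would pass to logarithms: for $k$ not too small, $|\Lambda|<\tfrac12$ gives $|r\log\alpha-(n+k)\log L_m|<4L_m^{-k}$, i.e.
\[
\left|\frac{\log L_m}{\log\alpha}-\frac{r}{n+k}\right|<\frac{4}{(n+k)L_m^{k}\log\alpha}.
\]
Since no positive power of the quadratic unit $\alpha$ is a rational integer, $\delta_m:=\log L_m/\log\alpha$ is irrational, and Lemma~\ref{lemma 2.2} shows that $r/(n+k)$ is a convergent of $\delta_m$ once $L_m^{k}>8(n+k)/\log\alpha$; its second part, applied with $S$ an upper bound for $n+k$ and $J(S)$ the largest partial quotient up to the first denominator exceeding $S$, yields $L_m^{k}\ll (n+k)$, and hence a very small bound on $k$. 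Because $\delta_m$ depends on $m$, this step must be run for each admissible $m$ (the continued fractions and the values $J(S)$ being computed in Python), with Lemma~\ref{Dujella} available as an alternative reduction device. With $k$ and $m$ now small and $n=O(\log r)$, the relation $r<(n+k)(m+1)+1$ confines $r$ to a short range, the constraint $m\mid r$ from Lemma~\ref{lemma 2.1} narrows the candidates further, and a direct check confirms that no tuple $(r,m,n,k)$ with $m,n\ge2$ satisfies \eqref{1.1}. I expect the principal difficulty to be bridging the astronomically large Matveev bound down to a feasible search: the reduction has to be performed uniformly over every admissible value of $m$, each giving a different irrational $\delta_m$, which is precisely what forces the computational component; additional care is also needed for the finitely many small values of $k$, where $|\Lambda|$ is not small enough to justify the passage to the logarithmic inequality and the corresponding tuples must be examined by hand.
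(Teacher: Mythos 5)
Your proposal follows essentially the same route as the paper's proof: the same linear form $\alpha^{r}L_m^{-(n+k)}-1$ bounded above by a constant times $L_m^{-k}$, Matveev's theorem over $\mathbb{Q}(\sqrt{5})$ with identical parameter choices, the divisibility bounds from Lemma~\ref{lemma 2.1} to turn the bound on $k$ into an absolute bound on $r$, a Legendre-lemma (Lemma~\ref{lemma 2.2}) continued-fraction reduction run separately for each admissible $m$, and a final bounded computer search. Apart from harmless variations in constants ($2L_m^{-k}$ versus $1.0025L_m^{-k}$, $T=r$ versus $T=2r$), the two arguments coincide.
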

	
\begin{proof}	
For \( m = 1 \), the problem is trivial. Hence we may assume that \( m \ge 2 \). 
We first consider the case \( n = 1 \). Then
\begin{align}\label{n=1}
L_m^{k+1} = L_r - L_m.
\end{align}

It is clear that \( L_r \) is even, and therefore \( 3 \mid r \).
Since \( L_m \mid L_r \), by Lemma \ref{L-div} there exists an integer \( s \ge 2 \) such that
\[
r = m(2s - 1).
\]
Hence
\[
r - m = 2m(s - 1),
\]
so \( r - m \) is even. Therefore,
\[
r - m \equiv 0 \pmod{4}
\quad \text{or} \quad
r - m \equiv 2 \pmod{4}.
\]

Assume \( r - m \equiv 0 \pmod{4}. \)

By Lemma \ref{factor},
\[
L_r - L_m
=
5 F_{\frac{r+m}{2}} F_{\frac{r-m}{2}}.
\]
Thus,
\[
L_m^{k+1}
=
5 F_{\frac{r+m}{2}} F_{\frac{r-m}{2}}.
\]
Hence \( 5 \mid L_m^{k+1} \), and therefore \( 5 \mid L_m \).
However, it is well known that \( 5 \nmid L_n \) for any integer \( n \).
This is a contradiction.

Assume \( r - m \equiv 2 \pmod{4}. \)

Again by Lemma~\ref{factor},
\[
L_m^{k+1}
=
L_{\frac{r+m}{2}} L_{\frac{r-m}{2}}.
\]

Since \( r > m \), we have
\[
\frac{r+m}{2} > m.
\]
Assume first that
\[
\frac{r+m}{2} \ne 1, 6.
\]

Let \( p \) be a prime divisor of \( L_{\frac{r+m}{2}} \).
From \eqref{n=1}, it follows that \( p \mid L_m^{\,k+1} \), and hence \( p \mid L_m \).

However, by Lemma~\ref{Carmichael}, we know that
\[
p \nmid L_t \quad \text{for all } t < \frac{r+m}{2}.
\]
Since \( m < \frac{r+m}{2} \), it follows in particular that
\[
p \nmid L_m,
\]
which is a contradiction. Therefore, this case cannot occur.

\medskip
\noindent
It remains to consider the exceptional cases.

\medskip
If \( \frac{r+m}{2} = 1 \), then since \( r > m \ge 2 \), this is impossible.

Now suppose that \( \frac{r+m}{2} = 6 \). Since \( r = m(2s - 1) \), we obtain
\[
m(2s - 1) + m = 12,
\]
\[
2ms = 12,
\]
\[
ms = 6.
\]
Because \( m, s \ge 2 \), the possible pairs are
\[
(m,s) \in \{(2,3), (3,2)\}.
\]
This yields
\[
(m,r) \in \{(2,10), (3,9)\}.
\]

Since \( 3 \mid r \), the only admissible pair is \( (m,r) = (3,9) \).
However, for \( m = 3 \) and \( r = 9 \), equation~\eqref{n=1} becomes
\[
4^{k+1} = 76 - 4,
\]
which is impossible.

This completes the proof in the case \( n = 1 \).

From now on, we assume that $n \geq 2$. Under this assumption, we have
$L_r = L_m^{\,n+k} + L_m^{\,n} \geq L_2^{\,3} + L_2^{\,2} = 36.$
Consequently, we obtain \( r \geq 8 \). Using inequality \eqref{1.3}, we compare both sides of equation \eqref{1.1}, which yields 
\begin{align*}
\alpha^{(m+1)(n+k)+1} > L_m^n + L_m^{n+k} = L_r \geq \alpha^{r-1},
\end{align*}
\begin{align*}
\alpha^{(m-1)(n+k)} < L_m^n + L_m^{n+k} = L_r < \alpha^{r+1}.
\end{align*}
respectively imply the following inequalities:
\begin{align}\label{3.2}
r < 2 + (m+1)(n+k),
\end{align}
		
\begin{align}\label{3.3}
r > -1 + (m-1)(n+k).
\end{align}
At this stage, we rewrite equation \eqref{1.1} as 
\begin{align*}
\alpha^{r}-L_{m}^{n+k}=L_m^{n}-\beta^{r}
\end{align*}
implies
\begin{align*}
0<\alpha^r - L_m^{n+k}\leq L_m^n +\left| \beta\right|^r<L_m^n+0.022<1.0025L_m^n,
\end{align*}
Upon dividing both sides of the last inequality by \( L_{m}^{n+k} \), it follows that:
		
\begin{equation}\label{3.4}
0<\alpha^{r}L_m^{-(n+k)}-1< \frac{1.0025}{L_m^{k}}.
\end{equation}
We now apply  Matveev's theorem to obtain an upper bound for \( k \). Let us take:
\[
\beta_1 := \alpha, \quad \beta_2 := L_m,
\]
and the corresponding exponents:
\[
r_1 := r, \quad r_2 := -(n + k).
\]
Since \( \beta_1 \), and \( \beta_2 \) belong to the real quadratic number field \( \mathbb{L} = \mathbb{Q}(\sqrt{5}) \), we set \( D = 2 \). So, we take
\[
\Delta := \alpha^r L_m^{-(n+k)} - 1.
\]
Next, we verify that \( \Delta \neq 0 \). Suppose that \( \Delta = 0 \). Then we would have
\[
\alpha^r = L_m^{n+k},
\]
which implies that \( \alpha^r \in \mathbb{Q} \), leading to a contradiction. Since
\[
h(\beta_1) = \frac{1}{2} \log \alpha, \quad h(\beta_2) = \log L_m.
\]
We can choose:
\[		
A_1 := \log \alpha, \quad A_2 := 2 \log L_m.
\]
Considering inequality \eqref{3.3} and the fact that \( T \geq \max\{ r, n + k \} \), we can take \( T := 2r \).
By then combining inequalities \eqref{3.4} and \eqref{Matveev ineq}, we obtain
\begin{align*}
\frac{1.0025}{L_m^{k}}>\exp\bigg({-1.4\cdot30^5\cdot2^{7.5}(1+\log2)\log \alpha(1+\log(2r)\log L_m}\bigg)
\end{align*}
which implies that
\begin{align}\label{3.5}
k<5.38\cdot10^9(1+\log(2r)).
\end{align}
		
If we use inequality \eqref{3.5} together with the bounds \( m < 1 + \frac{\log r}{\log \alpha} \) and \( n \leq 1 + \frac{\log \frac{r}{2}}{\log \alpha} \), which are derived from \eqref{2.1}, in inequality \eqref{3.2}, we obtain
\begin{align}\label{3.6}
r<2+\left(2+\frac{\log r}{\log \alpha}\right)\left(1+\frac{\log \frac{r}{2}}{\log \alpha}+5.38\cdot10^9(1+\log(2r))\right)
\end{align}
		
which gives
\begin{align}\label{3.7}
r<1.1\cdot10^{13}.  
\end{align}
Then, by using this bound in inequality \eqref{3.5}, we get
\begin{align}\label{3.8}
k < 1.71 \cdot 10^{11}.
\end{align}
In the following step, we use the bound on \( r \) from \eqref{3.7} within inequality \eqref{2.1} to derive bounds for \( m \) and \( n \):
\begin{equation*}
\log(m) + (m - 1) \log(\alpha) \leq \log(1.1\cdot 10^{13})
\end{equation*}
\begin{equation*}
\log(2) + (n - 1) \log(\alpha) \leq \log(1.1\cdot 10^{13})
\end{equation*}
Consequently, it follows that:
\begin{equation}\label{3.9}
m\leq 55,\,n\leq 61.
\end{equation}
		
In this phase, we will reduce the upper bound of $k$.
Let 
\begin{align*}
\Gamma:=r\log\alpha-(n+k)\log L_m.
\end{align*}
Obviously, $\Delta = e^{\Gamma} - 1$.  
Since $\Delta > 0$, it follows that $\Gamma > 0$.  
Then, by using inequality \eqref{3.4} and the fact that $x < e^{x} - 1$ for $x \neq 0$, we obtain:
\begin{align}\label{3.10}
0<r\log\alpha-(n+k)\log L_m<\frac{1.0025}{L_m^{k}}
\end{align}
From inequality \eqref{3.10} we derive
\begin{align}\label{3.11}
0 < \left|\frac{\log L_m}{\log \alpha}-  \frac{r}{n+k}\right| < \frac{1.0025}{L_m^k \cdot (n+k) \log\alpha}
\end{align}
Assume that $k\geq 9$. By taking into account the conditions $m\geq2$ and $n\leq61$, we can write
\begin{align*}
L_m^{k}\geq3^{k}>\frac{2.005}{ \log\alpha}(61+k)\geq\frac{2.005}{ \log\alpha}(n+k)
\end{align*}
thus, so we have
\begin{align*}
0 < \left|\frac{\log L_m}{\log \alpha}-  \frac{r}{n+k}\right| < \frac{1.0025}{L_m^k \cdot (n+k) \log\alpha}<\frac{1}{2(n+k)^2}.
\end{align*}
Here, we apply Lemma \ref{lemma 2.5} by taking $x_m = \frac{\log L_m}{\log \alpha}$, $m=2,3,4,...,55$. Utilizing inequalities \eqref{3.8} and \eqref{3.9}
\bigskip
we obtain $n+k<61+1.71\cdot10^{11}$. Then, if we set \( S = 61+1.71\cdot10^{11} \), we need to find the integer \( N_m \) such that
\begin{align*}
q_{N_m}^{(m)} > 61+1.71\cdot10^{11},
\end{align*}
and take $J^{(m)}(S):=\max \{ a^{(m)}_i \mid i = 0, 1, 2, \dots, N_{m} \}, m=2,3,4,...,55$.
Then
\begin{align}\label{3.12}
\left|x_m - \frac{r}{n+k}\right| > \frac{1}{(J^{(m)}(S)+2)(n+k)^2},\,\ \text{for}\,\, m = 2, 3, \dots, 55.
\end{align}
Therefore, by combining inequalities \eqref{3.11} and \eqref{3.12}, we derive:
\begin{align}\label{3.13}
L_m^{k}<\frac{1.0025(J^{(m)}(S)+2)}{\log\alpha}(61+k) ,\,\ m = 2, 3, \dots, 55.
\end{align}
By taking \( m = 2, 3, 4, \ldots, 55 \) in \eqref{3.13}, and using the corresponding values of \( J^{(m)}(S) \), we find that \( k \leq 8 \) for all \( m \), which contradicts our assumption. Then, since \( k \leq 8 \), \( m \leq 55 \), and \( n \leq 61 \), it follows from inequality \eqref{3.2} that $r \leq 3865$. Substituting this bound for \( r \) into inequality \eqref{2.1}, we deduce:
\begin{align*}
m \leq 12, \quad n \leq 16.
\end{align*}
		
In conclusion, our problem is reduced to finding solutions under the conditions \( 2 \leq m \leq 12 \), \( 2 \leq n \leq 16 \), and \( k \leq 8 \), which imply that \( r\leq 313 \). A direct computational verification using Python confirms that equation \eqref{1.1} has no solutions in the specified range.
\end{proof}
	
We conclude this paragraph with the following remark.

\begin{remark}
We note that the upper bound for $r$ in \eqref{3.7} is rounded up; therefore, inequality \eqref{3.6} may not hold for values of $r$ close to this bound. Since $r$ is a variable in equation \eqref{1.1}, enlarging the bound does not restrict the set of possible solutions. Moreover, all subsequent inequalities have right-hand sides that are monotonically increasing in $r$, which ensures that this approach permits the use of more convenient bounds without excluding any potential solutions of equation \eqref{1.1}.
\end{remark}

\section{COMMENTS}
It would be interesting to study more general versions of the equation \eqref{1.1} for the Lucas sequences of the first and second kinds, namely,
\begin{align}\label{First}
U_m^{\,n+k} + U_m^{\,n} = U_r
\end{align}
and
\begin{align}\label{Second}
V_m^{\,n+k} + V_m^{\,n} = V_r.
\end{align}

The main questions are:
\begin{enumerate}
    \item Are the solutions finite for any special cases of the Lucas sequences of the first and second kinds?  
    \item Are the solutions of \eqref{First} and \eqref{Second} finite (that is, $a$ and $b$ not fixed)?
\end{enumerate}

So far, the Fibonacci, Lucas cases of these equations have been solved. We believe that, for fixed $a$ and $b$, all solutions can be obtained using the same methods we employed. Furthermore, exact divisibility by powers of integers in Lucas sequences, as discussed in \cite{11}, may play an important role in addressing equations \eqref{First} and \eqref{Second}.

We leave these questions open for the reader.

\section*{Author Contributions}
All authors have accepted responsibility for the entire content of this manuscript and approved its submission.

\section*{Data Availability}

Some of the calculations in this study were performed using Python. The codes necessary to reproduce these calculations are publicly available at Zenodo: \url{https://doi.org/10.5281/zenodo.17479021}.

\section*{Conflict of interest}

The authors state no conflict of interest.

\end{document}